\newtheorem{thm}{Theorem}[section]
\theoremstyle{definition}
\newtheorem{exmp}[thm]{Example}
\theoremstyle{remark}
\newtheorem{rem}[thm]{Remark}
\let\c@equation\c@thm
\numberwithin{equation}{section}
\title{Designer Ideals with High Castelnuovo-Mumford Regularity}
\author{Brooke Ullery}
\address{Department of Mathematics\\ University of Michigan\\ 530 Church Street,
Ann Arbor, MI  48109-1043}
\email{bullery@umich.edu}
\newcommand{\Sym}{{{\textrm{Sym}}}}
\newcommand{\reg}{{{\textrm{reg}\,}}}
\newcommand{\Tor}{{{\textrm{Tor}\,}}}
\newcommand{\C}{{{\mathbb{C}}}}
\renewcommand{\P}{{{\mathbb{P}}}}
\newcommand{\Z}{{{\mathbb{Z}}}}
\newcommand{\newword}[1]{\textbf{\emph{#1}}}
\begin{document}

\begin{abstract}
The purpose of this paper is to give a simple construction of ideals whose Castelnuovo-Mumford regularity is large compared to the generating degree.  Moreover, our ideals have the property that the Castelnuovo-Mumford regularity is revealed as late in the resolution as desired.
\end{abstract}

\maketitle 

\section{Introduction}

Let $S=\C[x_0, x_1, \ldots, x_n]$ and let $M$ be a finitely-generated graded $S$-module.  We define the \newword{Castelnuovo-Mumford regularity} of $M$, denoted $\reg(M)$, to be the smallest number $r$ so that each generator of the $i$th syzygy has degree at most $i+r$.  As regularity governs the complexity of modules, there has been considerable interest in bounding and estimating the regularity of modules, particularly that of ideals.  

For smooth projective varieties, the regularity of the defining ideal is bounded by the so called ``complete intersection regularity."  That is, for an ideal $I$ with generators of degree at most $k$ and codimension $N$, a result of Bertram, Ein, and Lazarsfeld \cite{BEL} shows that 
$$\reg{I} \leq N(k-1) + 1,$$
the expression on the right being the regularity of a complete intersection of degree $k$ hypersurfaces.  Chardin and Ulrich \cite{CU} gave a generalization of this result in the case of $I$ defining a subscheme with rational singularities.  However, for an arbitrary homogenous ideal, the situation is very different: the regularity can be doubly exponential in the number of variables, and polynomial in the generating degree.   Specifically, if $J \subseteq \C[t_0, \ldots, t_r]$ is an ideal generated in degree $k$, Bayer and Mumford \cite{BM}
observed that work of Giusti and Galligo leads to the bound
$$\reg(J) \leq (2k)^{2^{r-1}}.$$  
A proof of a characteristic-free bound of this form was given by Caviglia and Sbarra \cite{CS}.
It turns out that one can't do much better than this bound; in fact, Bayer and Stillman \cite{BS}
showed that a construction of Mayr and Meyer \cite{MM} proves the following fact:\footnote{Also see \cite{Y}, \cite{K}, and \cite{S} for more developments on this construction.}
\vspace{.3cm}

\begin{quote} There exists an ideal $I \subseteq \C[t_0, \ldots, t_r]$ generated in degree $k$ such that $\reg I \geq (k-2)^{2^{(r/10) -1}}$.
\end{quote}

\vspace{.3cm}
Caviglia \cite{C} gave a much simpler construction of ideals with lower, yet still polynomial, regularity that grows in the generating degree $k$ like $k^2$.   Chardin and Fall \cite {CF} gave examples of ideals in $r+3$ variables with regularity growing like $k^r$, where $r$ generators are of degree $k+1$, and two generators are of degree on the order of $2^r$.   Other than these constructions, there have been few examples of varieties with higher than complete intersection regularity, and the existing examples tend to be combinatorial and algebraic in nature.  In this paper, we give a simple construction that takes as input a module $M$ and outputs an ideal $J_M$ with similar regularity and other homological properties.  By applying our construction (detailed in \S 2) to pure modules, whose homological properties we can easily specify, we get a family of ideals whose regularity grows in the generating degree $k$ like $k^{(N-1)/(r-N)}$, where $N$ is the codimension of the zeroes of $I$. 

Furthermore, this family of ideals has the property that the regularity is ``revealed" late in the resolution.  In order to make this notion precise, we give a few more important definitions.  We define the \newword{graded Betti numbers} of $M$ to be 
$$\beta_{i,j}^S(M) = \dim(\Tor_i^S (M, \C))_j.$$
That is, if $F_\bullet \to M \to 0$ is a minimal graded free resolution, then $\beta_{i,j}^S(M)$ is the number of minimal generators of $F_i$ in degree $j$.  When the ring is clear from the context, we may write $\beta_{i,j}(M)$, or simply $\beta_{i,j}$. We arrange the Betti numbers in the \newword{Betti table} $\beta(M)$ as shown below.

\begin{center}$\beta(M) =$
\begin{tabular}{ r|ccccc}
&0&1&2&$\cdots$&$i$ \\ 
  \hline    
  0 & $\beta_{0,0}$ &$\beta_{1,1}$ & $\beta_{2,2}$ & $\cdots$ &\\                    
  1& $\beta_{0,1}$ & $\beta_{1,2}$ & $\beta_{2,3}$ & $\cdots$ &\\ 
  2&$\beta_{0,2}$ & $\beta_{1,3}$ & $\beta_{2,4}$ & $\cdots$ &\\ 
  $\vdots$& $\vdots$ & $\vdots$ & $\vdots$ & $\ddots$ &\\
  $j$& & & & & $\beta_{i, i+j}$ \\ 
\end{tabular}
\end{center}
\vspace{.3cm}
Notice that in terms of the Betti numbers of $M$ the Castelnuovo-Mumford regularity of $M$ is $$\reg{M} = \max\{j-i|\beta_{i,j}(M) \neq 0\}.$$  In other words, $\reg M$ measures the position of the lowest nonzero row of the graded Betti table.  Similarly, set $$t_i(M) =\max\{j|\beta_{i.j}(M)\neq0\}.$$  We call $(t_0(M), t_1(M), \ldots)$  the \newword{maximal degree sequence} of $M$.  (We define the \newword{minimal degree sequence} analogously, replacing ``max" with ``min.")  Note that $t_i(M)-i$ measures the position of the lowest nonzero entry in the $i$th column for each $i$.  In particular, if the degree sequence is strictly increasing (e.g. if $M$ is Cohen-Macaulay\footnote{For $M$ Cohen-Macaulay, the dual $F^*$ of the minimal free resolution is also a minimal free resolution.  The minimal degree sequence of $F^*$ is certainly increasing, as is the case for every resolution, and it corresponds to the negative of the maximal degree sequence of $M$. The claim follows immediately.}) and $t_r(M)$ is the last nonzero entry, then $$\reg(M) = t_r(M) - r.$$
We call $M$ a \newword{pure module} if for every $i$, $\beta_{i,j}\neq 0$ for at most one $j$.  That is, every module in its minimal free resolution is generated in a single degree.  In the case of pure modules, the maximal degree sequence gives all of the information about the degrees of the modules in the free resolution, so we call it simply the \newword{degree sequence}.

As Jason McCullough \cite{M} pointed out, the existing examples of ideals with high regularity
have regularity governed by the first syzygy.  That is, after $t_1(M)$, the maximal degree sequence increases by one at each step.  In this case, the Betti table has the following shape, where $k$ is the generating degree, and the support of the Betti table is contained in the shaded region.

\begin{center}
\includegraphics[width=110mm]{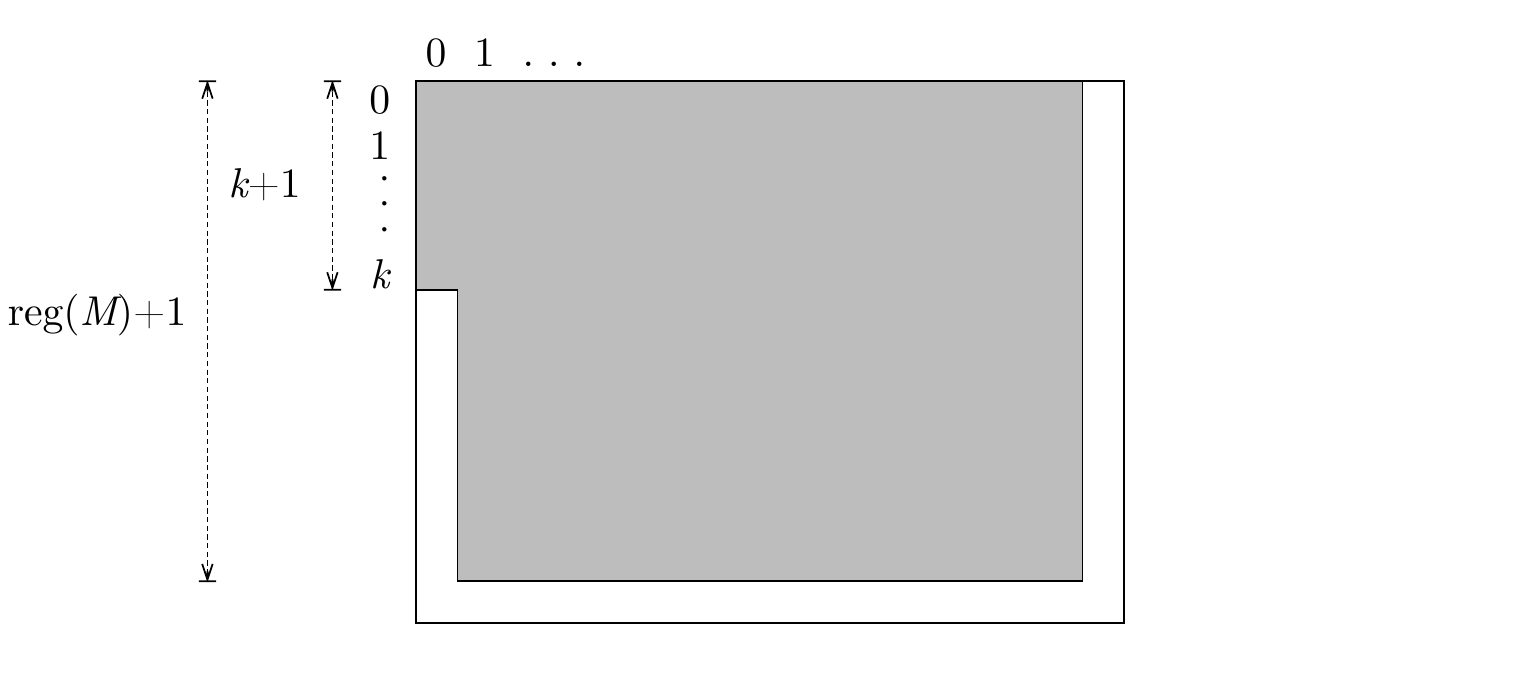}
\end{center}

By applying our construction to pure modules, we are able to find families of ideals that have high regularity (polynomial in the generating degree) that is revealed at the $n$th syzygy for any $n$.  The example below is also presented in full detail in \S 3.

\begin{exmp}

Let $S := \C[x_0, x_1, \ldots, x_n, y_1, y_2, \ldots, y_N]$ be the homogeneous coordinate ring for $\P^{n+N}$, and $X$ an $n$-plane given by the ideal $I  := (y_1, y_2, \ldots, y_N) \subseteq S$.  Let $M$ be a pure $S/I$-module with degree sequence $$(k, k+1, \ldots, k+n, k + n +1+d).$$  
Then the Betti table of $M$ has the following shape:

\begin{center}
\includegraphics[width=70mm]{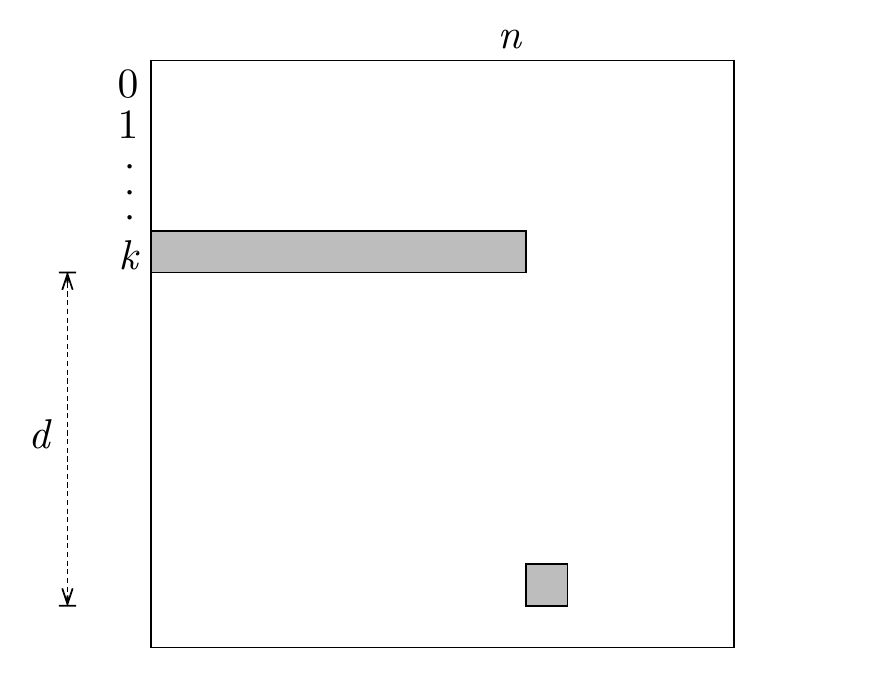}
\end{center}

Under certain restrictions on the parameters $N$, $k$, and $d$, we can construct an ideal $J_M \subseteq S$ with maximal degree sequence $$(k+1, k+2 \ldots, k+n, k + n +d+1, k+n+d+2, \ldots, k+n+d+N+1),$$ supported on $X$.  Given the restrictions, as $k$ increases, $n$ and $N$ held constant, the regularity of $J_M$ grows like $k^{(N-1)/n}$, so that the Betti table has the following shape:

\begin{center}
\includegraphics[width=130mm]{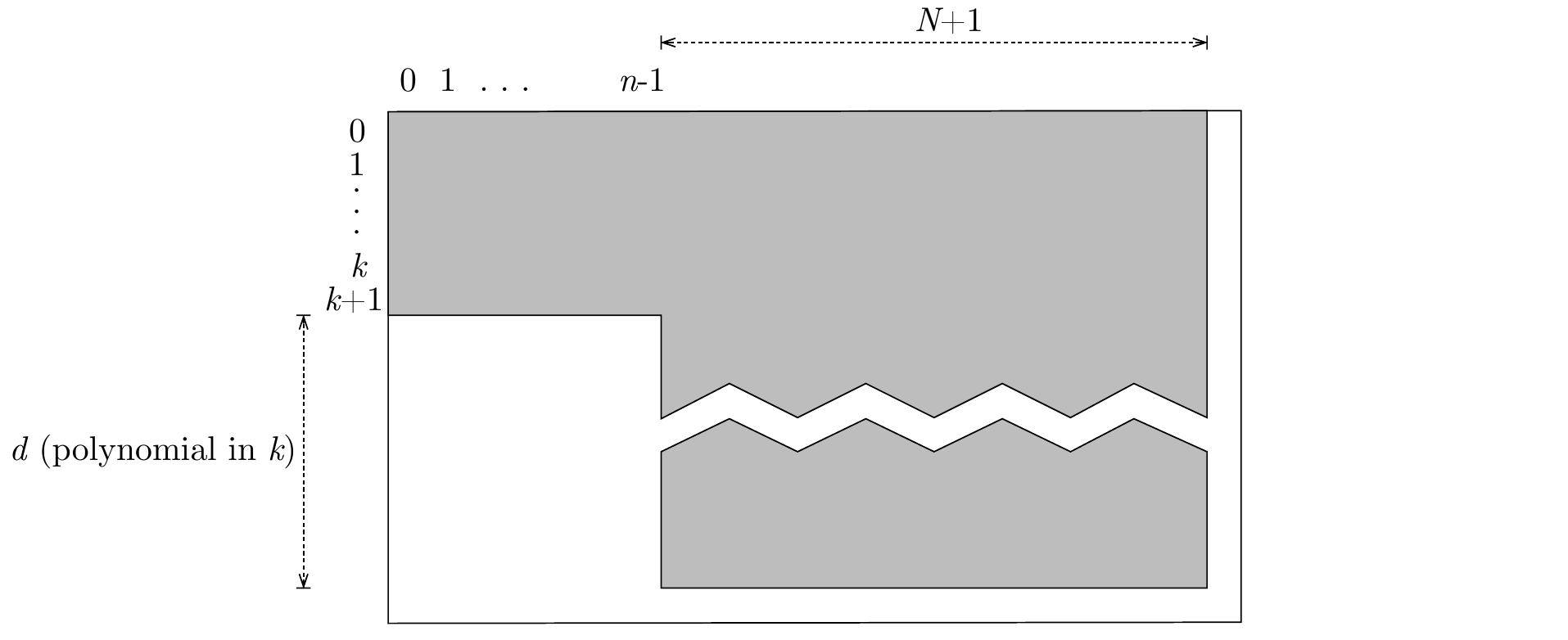}
\end{center}

\end{exmp}

In order to illustrate the properties of these ideals described above, we can look at the shape of the Betti table for specific large values of $k, n, N,$ and $d$.

\begin{exmp}
If we set

\begin{itemize}
\item generating degree $=k +1 = 51$, 
\item dimension of $X = n = 20$, and
\item codimension $=N = 5000$, 
\end{itemize}
then we can find an ideal with a Betti table having the following shape.  The dotted line bounds the support of the Betti table of a complete intersection generated in the same degree. (The figure is not drawn to scale.)

\begin{center}
\includegraphics[width=115mm]{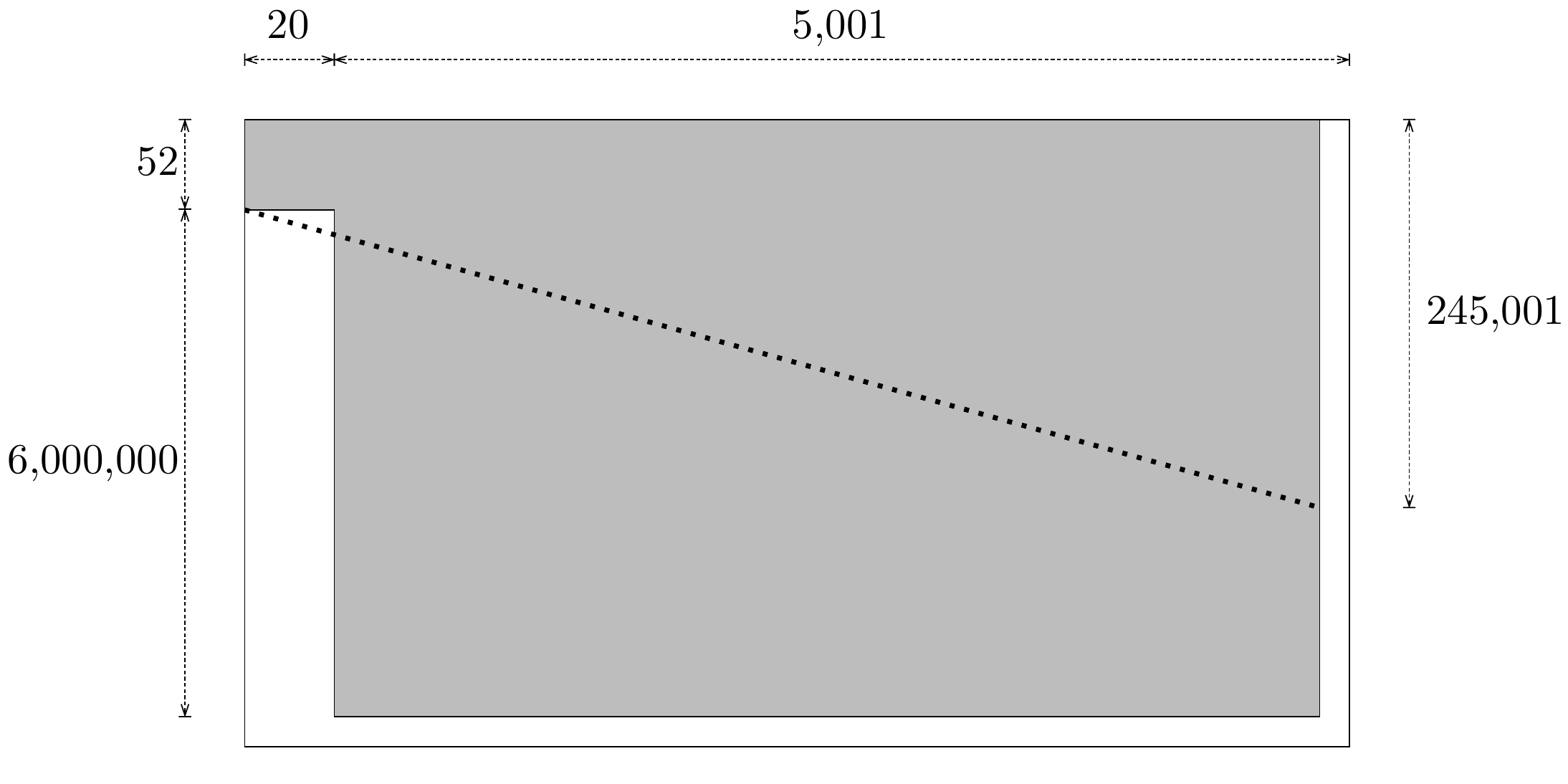}
\end{center}
\end{exmp}

\vspace{.5cm}
We can look at the above examples from a few different perspectives.  First of all, as stated earlier, this gives a method of constructing examples of ideals with high regularity governed as late in the resolution as desired.  In particular, the regularity is high even with respect to the degrees of the first syzygy, which answers a question raised by Huneke and McCullough (see the footnote on page 2 of \cite{DHS}) and casts doubt on the philosophy expressed in the same paper as well as in a paper of McCullough \cite{M} that the first syzygy contains most of the information about the regularity of an ideal.  Second, as we will illustrate in the next section, the construction is simple and algebraic, whereas earlier constructions are more combinatorial in nature.  

The examples above are special cases of a general family of ideals, described in our main theorem:

\begin{thm}
\label{degseq}
Let $X := \P^n$, and consider a linear embedding $X \subseteq \P^{n+N} =: Y$.  Define $$S := \C[x_0, x_1, \ldots, x_n, y_1, y_2, \ldots, y_N]$$ to be the homogeneous coordinate ring of $Y$, and $$I := (y_1, y_2, \ldots, y_N) \subseteq S$$ the homogenous ideal defining $X$.  Then $X$ has homogeneous coordinate ring $R := S/I$, which we can think of as $\C[x_0, x_1, \ldots, x_n]$.
Let $M$ be a finitely generated graded $R$-module, generated in positive degrees, with strictly increasing maximal degree sequence $(t_0, t_1, \ldots, t_r)$, where $r \leq n+1$.   Let $k$ be the minimum degree of a minimal generator of $M$.  Suppose
\begin{equation}\label{ineq} \sum_{j \in \Z} \beta_{0,j}^R(M) \leq \binom{k + N - 1}{k}.\end{equation}
Then there exists an ideal $J_M \subseteq S$ supported on $X$ such that $J_M$ has maximal degree sequence $$(t_1, t_2, \ldots, t_r, t_r +1, t_r +2, \ldots, t_r + N)$$ as an $S$-module.  In particular, $\reg J_M = \reg M + 1$.
\end{thm}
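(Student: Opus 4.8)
The plan is to prove the theorem by exhibiting an explicit (a priori non-minimal) $S$-free resolution of $S/J_M$, reading off its extremal twists, and then arguing that these extremal twists survive into the minimal resolution. Since the introduction records that $\reg M = t_r - r$ for a strictly increasing degree sequence, and since the maximal degree sequence of $J_M$ determines $\reg J_M = \max_p\{t_p(J_M) - p\}$, it suffices to identify the maximal degree sequence of $J_M$; the final equality $\reg J_M = (t_r + N) - (r + N - 1) = \reg M + 1$ is then a one-line check, as is the shift relating the degree sequences of $J_M$ and $S/J_M$. So the real content is the resolution of $S/J_M$.

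First I would assemble the resolution from two ingredients: the minimal $R$-free resolution $F^R_\bullet(M)$ (with maximal twists $t_0 < t_1 < \cdots < t_r$), lifted to $S$ by lifting its matrices (whose entries involve only the $x_i$), and the Koszul complex $K_\bullet$ on the regular sequence $y_1, \ldots, y_N$, which resolves $R = S/I$ over $S$ and contributes a shift of $j$ and a factor $\binom{N}{j}$ in homological degree $j$. For the free module $M = R(-k)$ this should recover $J_M = I^{k+1}$, whose linear (Eagon--Northcott) resolution has maximal degree sequence $(0, k+1, \ldots, k+N)$; this is the sanity check the general construction must specialize to. The inequality $\sum_j \beta_{0,j}^R(M) \le \binom{k+N-1}{k} = \dim_{\C}\C[y_1, \ldots, y_N]_k$ is exactly what is needed to inject the generators of $M$ into the degree-$k$ piece $\C[y]_k = \Sym^k\langle y_1, \ldots, y_N\rangle$; this injection is the connecting datum that glues the pair $(F^R_\bullet(M), K_\bullet)$ into a genuine ideal $J_M$ with $\sqrt{J_M} = I$, i.e. supported exactly on $X$ (too few $y$-forms would shrink the support). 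Concretely I would realize $S/J_M$ as the total complex (iterated mapping cone) of the lifted $F^R_\bullet(M)$ against the symmetric-power complexes resolving the powers of $I$, with the above injection performing the gluing at the $F_0$-edge, where the rank-$\sum_j\beta_{0,j}^R(M)$ generator module is replaced by $S$ in degree $0$.

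Next I would prove acyclicity of this total complex. Because $y_1, \ldots, y_N$ is a regular sequence, the symmetric-power complexes are acyclic, the associated double complex has acyclic rows, and the spectral sequence of the double complex then degenerates to show the total complex resolves the cyclic module $S/J_M$. Granting acyclicity, the twists in homological degree $p$ are governed by the bookkeeping $\beta^S_{p,q} = \sum_{i+j=p}\binom{N}{j}\,\beta^R_{i,q-j}(M)$ at the non-splice positions, together with the replacement of the degree-$0$ term by $S$. Carrying out the maximization $t_p^S = p + \max_i(t_i - i)$ over the admissible range, and using that $t_i - i$ is nondecreasing because the $t_i$ strictly increase, gives maximal twist $0$ at $p=0$, $t_p$ for $1 \le p \le r$, and $t_r + (p-r)$ for $r < p \le r + N$, which is exactly the claimed degree sequence of $S/J_M$.

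The main obstacle is minimality: I must show these extremal twists are not cancelled on passing from my explicit complex to the minimal resolution, in particular that the splice introduces no unit entries and destroys the top degree in no column. Here both hypotheses do real work. Strict monotonicity of $(t_i)$ forces the extremal (``diagonal'') twists $t_p$ to live in homological degrees distinct from the subdominant Koszul contributions, so they cannot pair off in a cancellation, while $r \le n+1$ keeps the total length $r + N$ at most $\dim S = n + N + 1$, so no collapse is forced at the bottom. Once minimality is established, the maximal degree sequence of $S/J_M$ is $(0, t_1, \ldots, t_r, t_r+1, \ldots, t_r + N)$, hence that of $J_M$ is $(t_1, \ldots, t_r, t_r+1, \ldots, t_r+N)$; evaluating $\max_p\{t_p(J_M) - p\}$ yields $t_r - r + 1 = \reg M + 1$, which completes the proof.
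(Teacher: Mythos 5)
Your route is essentially the paper's: the inequality is used to embed the generator module $F_0$ of $M$ into $I^k/I^{k+1}\cong R(-k)^{\oplus\binom{k+N-1}{k}}$, the $R$-resolution is turned into an $S$-resolution by tensoring with the Koszul complex on $y_1,\dots,y_N$, and the result is spliced against the linear resolution of $I^{k+1}$; your maximization of $t_i-i$ over $i+j=p$ is exactly the paper's degree bookkeeping. One structural remark: you never actually exhibit the ideal. The paper sets $E:=\ker(F_0\to M)$, views $E\subseteq F_0\hookrightarrow I^k/I^{k+1}$, and defines $J_M$ to be the preimage of $E$ in $I^k$, which packages your ``gluing'' as the short exact sequence $0\to I^{k+1}\to J_M\to E\to 0$ and replaces your iterated mapping cone by a single application of the Horseshoe Lemma to the minimal $S$-resolutions of $I^{k+1}$ and of $E$. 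Your total-complex description should be made at least this concrete before acyclicity can even be discussed.

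The genuine gap is the minimality step, which you rightly call the main obstacle but do not actually close. Neither of your stated reasons works: strict monotonicity of $(t_i)$ does not make the twists in consecutive homological degrees disjoint (the degrees $t_i+j$ with $i+j=p$ and $i'+j'=p+1$ can certainly coincide, and in any case equal twists alone neither force nor forbid cancellation), and $r\le n+1$ merely bounds the length of the resolution, which is irrelevant to unit entries. What is actually needed, and what the hypotheses are consumed for, is: (1) the $S$-resolution $G'_\bullet$ of $E$ obtained by tensoring with the Koszul complex is minimal because it is a tensor product of minimal complexes, so it genuinely carries the extremal twists $t_{i+1}$, resp.\ $t_r+(i-r+1)$, in homological degree $i$; and (2) the only possible units sit in the off-diagonal Horseshoe block $G'_{i+1}\to \oplus S(-(k+1+i))$, which would require a generator of $G'_{i+1}$ in degree exactly $k+1+i$, and this is excluded by the minimal-degree estimate $t_1+(i+1)>k+(i+1)$, i.e.\ by $t_1>t_0\ge k$. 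This last comparison --- the one concrete place where ``strictly increasing'' and the definition of $k$ enter the minimality argument --- is absent from your proposal, so as written you cannot conclude that the claimed maximal degree sequence is attained rather than merely an upper bound.
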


\section{The General Ideal Construction}

Before proving Theorem \ref{degseq}, we will give the construction of the ideal $J_M$.

Let $M$ be an $R$-module satisfying the hypotheses of the theorem, and $$\cdots \rightarrow F_2 \rightarrow F_1 \rightarrow F_0 \to M \to 0$$ a minimal graded $R$-free resolution of $M$.  Define $E := \ker (F_0 \to M)$, as below:

$$\xymatrix@=10pt{\cdots \ar[rr] && F_2 \ar[rr] && F_1 \ar[rr] \ar[dr] && F_0 \ar[rr] && M \ar[rr] && 0
\\ &&&&& E \ar[ur] \ar[dr] &&&&&
\\&&&& 0 \ar[ur] && 0 &&&&}$$
Since $k$ is the minimum degree of a minimal generator of $M$, we can write $$F_0 = \bigoplus_{j \geq k} R(-j)^{\oplus \beta_{i,j}}.$$
$I^k/I^{k+1}$ is the $k$th symmetric power of the conormal module of $X$ in $Y$ and we can write it as $$I^k/I^{k + 1}  \cong \Sym^k_R (I/I^2)  \cong R(-k)^{\oplus \binom{k + N - 1}{k}}. $$
Thus, since the rank of $F_0$ is less than the rank of $I^k/I^{k+1}$, by \eqref{ineq}, and since the degrees of the generators of $F_0$ are all at least $k$, there exist inclusions of $F_0$ into $I^k/I^{k+1}.$   Fix one such inclusion: $$F_0 \subseteq R(-k)^{\oplus \binom{k + N - 1}{k}} \cong I^k/I^{k + 1}.$$ 
Thus, $E \subseteq I^k/I^{k+1}$, so we get the following diagram:

$$\xymatrix{&&& E \ar@{^{(}->}[d] & \\ 0 \ar[r] & I^{k+1} \ar[r] & I^k \ar[r] & I^k/I^{k+1} \ar[r] & 0}.$$
Now, we define $J_M$ to be the lift of $E$ in $I^k$, so that we have an exact sequence
$$0 \to I^{k+1} \to J_{M} \to E \to 0.$$

The idea is that $I^{k+1}$ has a $(k+1)$-linear resolution, which allows $J_M$ to inherit its maximal degree sequence from $E$, and thus with our original module $M$.  The details are below.

\begin{proof}[Proof of Theorem \ref{degseq}]
By construction, $I^{k+1} \subseteq J_M \subseteq I^k$.  Since  $\sqrt{I^{k+1}} = \sqrt{I^k} = I$, we must have $\sqrt{J_M} = I$ as well.  Thus, $J_M$ is supported on $X$.

In order to find a resolution for $J_M$, we must first resolve $E$ and $I^{k+1}$.  

The minimal resolution for $E$ over $R$ is $\cdots G_2 \to G_1 \to G_0 \to E \to 0$, where $G_i := F_{i+1}$ for $i = 0, 1, \ldots$, and $F_\bullet$ the minimal resolution of $M$ as above.  Notice that $R[y_1, \ldots, y_N] \cong S$.  Thus, $G_\bullet [y_1, \ldots, y_N]$ is a minimal resolution for $E[y_1, \ldots, y_N]$ over $S$.  In order to recover $E$, we tensor with the Koszul complex with respect to the regular sequence $y_1, \ldots, y_N$, giving us the double complex $G_\bullet [y_1, \ldots, y_N] \otimes K_\bullet (y_1, \ldots, y_N)$, which is a minimal resolution of $E$ over $S$.  Denote this resolution $G'_\bullet$.

Our next step is to find the maximal degree sequence of $G'_\bullet$.  Notice that the maximal degree sequences of $G_\bullet [y_1, \ldots, y_N]$ and $K_\bullet (y_1, \ldots, y_N)$ are $(t_1, \ldots, t_r)$ and $(0, 1, \ldots, N)$, respectively.  Since $t_r > \cdots  > t_1 > t_0 > 0$, the maximal and minimal degree sequences of $G'_\bullet$ will be $(t_1, \ldots, t_r, t_r+1, t_r + 2, \ldots, t_r + N)$ and $(t_1, t_1+1, \ldots, t_1+N, t_2+N, \ldots, t_r+N)$, respectively.

Now we can construct the resolution of $J_M$.  The ideal $I^{k+1}$ has a pure linear resolution.  Applying the Horseshoe Lemma, we obtain the following commutative diagram

$$\xymatrix{ & \vdots \ar[d] &\vdots \ar[d]& \vdots \ar[d] & \\
0 \ar[r]& \oplus S(-(k+3)) \ar[d]\ar[r] &G''_2\ar[d] \ar[r]& G'_2 \ar[d] \ar[r]&0 \\ 
0 \ar[r]& \oplus S(-(k+2)) \ar[d] \ar[r]&G''_1 \ar[d] \ar[r]& G'_1 \ar[d]\ar[r]& 0\\ 
0 \ar[r]& \oplus S(-(k+1)) \ar[d] \ar[r] &G''_0 \ar[d] \ar[r]& G'_0 \ar[d]\ar[r]& 0 \\ 
0 \ar[r] & I^{k+1} \ar[d] \ar[r] & J_M \ar[r] \ar[d]& E \ar[r] \ar[d] & 0\\
&0 & 0& 0 &}$$
where the rows and columns are exact and $G''_i = \left( \oplus S(-(k+1+i))\right) \oplus G'_i$ for $0\leq i \leq N$, $G'_i$ otherwise.  To show that $G''_\bullet$ is minimal, it suffices to check that $G'_{i+1}$ does not have $S(-(k+1+i))$ as a summand for $i =0, \ldots, N$.  
The minimum degree of $G'_{i+1}$ is never less than $t_1+i+1$.  From our hypotheses, we have $t_1 > t_0 \geq k$, so $t_1+i + 1 >k+i+1$, as desired.  Thus, $J_M$ has maximal degree sequence $(t_1, \ldots, t_r, t_r+1, t_r + 2, \ldots, t_r + N)$.  In particular, $\reg J_M = t_r -r +1 = \reg M + 1$.

 \end{proof}
 
 \begin{rem}  In the proof above, we opted to write everything out using complexes, though it should be noted that a similar argument can be made using $\Tor $ modules.
 \end{rem}
 
 \section{A Family of Examples From Pure Modules}
 
 By applying our theorem to pure modules, we not only get a cleaner construction, but we also obtain a family of ideals with high asymptotic regularity.

With the same notation as in Theorem \ref{degseq}, let $M$ be a pure $R$-module with degree sequence $$(k, k+1, \ldots, k+n, k+n+1+d).$$  That is, its resolution is linear until the final syzygy, at which point there is a jump in degree of size $d+1$.  By the existence of pure modules, due to Eisenbud and Schreyer \cite{ES},
we can find such a resolution with $$\beta_0(M) = \binom{n+d}{n},$$ where $\beta_0(M)$ is the number of generators of $M$.  (In this specific case, the resolution happens to be the dual of the Eagon-Northcott complex, shifted by $k$.)

Thus, by the theorem, as long as we choose parameters $n$, $N$, $k$, and $d$ so that
$$\binom{n+d}{n} \leq \binom{k+N-1}{k},$$ we get an ideal $J_M$ supported on the smaller projective space $X$ with $$\reg J_M = \reg M + 1 = k + d +1.$$  Furthermore, the above inequality shows that $d$, and thus $\reg J_M$, grows in the generating degree $k$ like $ck^{(N-1)/n}$, where $c$ is a constant.

\section{Further Considerations}
The example in \S 3 was a very specific application of the general construction.  It may be interesting to apply the construction to other modules (possibly not pure) to see if the regularity would increase.  Furthermore, we found ideals supported on linear spaces due to the ease of calculating the conormal module and resolving modules over a polynomial ring.  It might be feasible to apply the construction to other projective varieties whose conormal modules are easily calculated -- possibly other complete intersections or Segre varieties.

\section*{Acknowledgements}
I would like to thank my advisor, Rob Lazarsfeld, for suggesting the problem and for his invaluable help along the way.  I would also like to thank both David Eisenbud and Jason McCullough for reading drafts and giving helpful suggestions.  Finally, thanks to Daniel Erman for the fruitful discussions.  

The work for this paper was partially supported by NSF RTG  grant DMS 0943832.

 \nocite{*}
 \bibliographystyle{mrl}
\bibliography{bibliography}

\begin{thebibliography}{10}

\bibitem{BM}
D.~Bayer and D.~Mumford, \emph{What can be computed in algebraic geometry?}, in
  Computational algebraic geometry and commutative algebra, 1--48, Cambridge
  Univ. Press (1993).

\bibitem{BS}
D.~Bayer and M.~Stillman, \emph{On the complexity of computing syzygies}, J.
  Symbolic Comput. \textbf{6} (1988), no. 2--3,  135--147.

\bibitem{BMNSSS}
J.~Beder, J.~Mc{C}ullough, L.~N\'u\~nez {B}etancourt, A.~Seceleanu, B.~Snapp,
  and B.~Stone, \emph{Ideals with larger projective dimension and regularity},
  J. Symbolic Comput. \textbf{46} (2011), no.~10,  1105--1113.

\bibitem{BEL}
A.~Bertram, L.~Ein, and R.~Lazarsfeld, \emph{Vanishing theorems, a theorem of
  {S}everi, and the equations defining projective varieties}, J. Amer. Math.
  Soc. \textbf{4} (1991), no.~3,  587--602.

\bibitem{BE}
D.~Buchsbaum and D.~Eisenbud, \emph{Remarks on ideals and resolutions},
  Symposia Mathematica \textbf{XI} (1971) 193--204.

\bibitem{C}
G.~Caviglia, Koszul algebras, Castelnuovo-Mumford regularity and generic
  initial ideals, Ph.D. thesis, University of Kansas (2004).

\bibitem{CS}
G.~Caviglia and E.~Sbarra, \emph{Characteristic-free bounds for the
  {C}astelnuovo-{M}umford regularity}, Compos. Math. \textbf{141} (2005),
  no.~6,  1365--1373.

\bibitem{CF}
M.~Chardin and A.~L. Fall, \emph{Sur la r\'egularit\'e de
  {C}astelnuovo-{M}umford des id\'eaux, en dimension 2}, C. R. Math. Acad. Sci.
  Paris \textbf{341} (2005), no.~4,  233--238.

\bibitem{CU}
M.~Chardin and B.~Ulrich, \emph{Liaison and {C}astelnuovo-{M}umford
  regularity}, Amer. J. Math. \textbf{124} (2002), no.~6,  1103--1124.

\bibitem{DHS}
H.~Dao, C.~Huneke, and J.~Schweig, \emph{Bounds on the regularity and
  projective dimension of ideals associated to graphs}, Journal of Algebraic
  Combinatorics  (2012) 1--19.

\bibitem{ES}
D.~Eisenbud and F.-O. Schreyer, \emph{Betti numbers of graded modules and
  cohomology of vector bundles}, J. Amer. Math. Soc. \textbf{22} (2009), no.~3,
   859--888.

\bibitem{K}
J.~Koh, \emph{Ideals generated by quadrics exhibiting double exponential
  degrees}, J. Algebra \textbf{200} (1998), no.~1,  225--245.

\bibitem{MM}
E.~Mayr and A.~Meyer, \emph{The complexity of the word problems for commutative
  semigroups and polynomial ideals}, Advances in Math \textbf{46} (1982)
  305--329.

\bibitem{M}
J.~Mc{C}ullough, \emph{A polynomial bound on the regularity of an ideal in
  terms of half of the syzygies}, Math. Res. Lett. \textbf{19} (2012), no.~3,
  555--565.

\bibitem{S}
I.~Swanson, \emph{The minimal components of the {M}ayr-{M}eyer ideals}, J.
  Algebra \textbf{200} (1998), no.~1,  225--245.

\bibitem{Y}
C.-K. Yap, \emph{A new lower bound construction for commutative {T}hue systems
  with applications}, J. Symbolic Comput. \textbf{12} (1991), no.~1,  1--27.

\end{thebibliography}

\end{document}